\documentclass[a4paper, reqno]{amsart}

\usepackage{amsmath}
\usepackage{amsfonts}
\usepackage{amsthm}
\usepackage{graphicx}
\usepackage{eucal}
\usepackage{amscd}
\usepackage[all,2cell]{xy}
\usepackage{amssymb}
\usepackage{mathrsfs}

 \usepackage{tikz}
 \usetikzlibrary{positioning}
 \tikzset{mynode/.style={draw,circle,inner sep=1pt,outer sep=0pt}}

\newtheorem{teo}{Theorem}[section]
\newtheorem{cor}[teo]{Corollary}
\newtheorem{lem}[teo]{Lemma}
\newtheorem{defi}[teo]{Definition}

\newtheorem{prop}[teo]{Proposition}

\newcommand{\X}{\ensuremath{\mathbb{X}}}
\newcommand{\A}{\ensuremath{\mathbb{A}}}

\newdir{ |>}{{}*!/-3.5pt/@{|}*!/-8pt/:(1,-.2)@^{>}*!/-8pt/:(1,+.2)@_{>}}

\dedicatory{}

\begin{document}

\title[CENTRAL EXTENSIONS AND WEAKLY ACTION REPRESENTABLE CATEGORIES]{CENTRAL EXTENSIONS OF ASSOCIATIVE ALGEBRAS AND WEAKLY ACTION REPRESENTABLE CATEGORIES}

\author{George Janelidze}
\address[George Janelidze]{Department of Mathematics and Applied Mathematics, University of Cape Town, Rondebosch 7700, South Africa}
\thanks{}
\email{george.janelidze@uct.ac.za}

\keywords{central extension, split extension, semi-abelian category, action accessible category, action representable category, weakly action representable category}

\makeatletter\@namedef{subjclassname@2020}{\textup{2020} Mathematics Subject Classification}\makeatother

\subjclass[2020]{18E13, 18E50, 16S70}

\begin{abstract}
A central extension is a regular epimorphism in a Barr exact category $\mathscr{C}$ satisfying suitable conditions involving a given Birkhoff subcategory of $\mathscr{C}$ (joint work with G. M. Kelly, 1994). In this paper we take $\mathscr{C}$ to be the category of (not-necessarily-unital) algebras over a (unital) commutative ring and consider central extensions with respect to the category of commutative algebras. We propose a new approach that avoids the intermediate notion of central extension due to A. Fr\"ohlich in showing that $\alpha:A\to B$ is a central extension if and only if $aa'=a'a$ for all $a,a'\in A$ with $\alpha(a')=0$. This approach motivates introducing what we call \textit{weakly action representable categories}, and we show that such categories are always action accessible. We also make remarks on what we call \textit{initial weak representations of actions} and formulate several open questions.   
\end{abstract}

\date{\today}

\maketitle

\tableofcontents

\section{Introduction}

Let $K$ be a fixed commutative ring (with $1$). A $K$\textit{-algebra} is a $K$-module $A$ equipped with an associative multiplication with $k(aa')=(ka)a'=a(ka')$ for all $k\in K$ and $a,a'\in A$. Let $\mathrm{Comm}:\mathsf{Alg}(K)\to\mathsf{CommAlg}(K)$ be the reflection of the category $\mathsf{Alg}(K)$ of $K$-algebras to its full subcategory $\mathsf{CommAlg}(K)$ of commutative $K$-algebras. Let $R:\mathsf{Alg}(K)\to\mathsf{Alg}(K)$ be the functor defined by
$$R(A)=\mathrm{Ker}(\eta_A:A\to\mathrm{Comm}(A))=\langle\{aa'-a'a\mid a,a'\in A\}\rangle,$$
in obvious notation. 

The following definition introduces three notions that are in fact equivalent to each other:

\begin{defi}
	A regular epimorphism (=surjective homomorphism) $\alpha:A\to B$ of $K$-algebras is said to be
	\begin{itemize}
		\item [(a)] classically-central, if $aa'=a'a$ for all $a,a'\in A$ with $\alpha(a')=0$;
		\item [(b)] algebraically-central, if for every two parallel morphisms $u$ and $v$ in $\mathsf{Alg}(K)$ with codomain $A$, we have
		$$\alpha u=\alpha v\Rightarrow R(u)=R(v);$$
		\item [(c)] categorically-central, if the diagram
		$$\xymatrix{A\times_BA\ar[d]_{\pi}\ar[rr]^-{\eta_{A\times_BA}}&&\mathrm{Comm}(A\times_BA)\ar[d]^{\mathrm{Comm}(\pi)}\\A\ar[rr]_-{\eta_A}&&\mathrm{Comm}(A)}$$
		where $\pi$ is any of the two pullback projections $A\times_BA\to A$ and the horizontal arrows are unit components of the reflection $\mathrm{Comm}$, is a pullback. 
	\end{itemize}
\end{defi}

In this definition:
\begin{itemize}
	\item (b) copies a modification, due to A. S.-T. Lue \cite{[L1967]}, of A. Fr\"ohlich's definition of a central extension given in \cite{[F1963]}, except that both of these authors considered a more general case where the role of $\mathsf{CommAlg}(K)$ is played by an arbitrary subvariety of $\mathsf{Alg}(K)$; see also J. Furtado-Coelho \cite{[F-C1976]}, T. Everaert \cite{[E2007]}, and T. Everaert and T. Van der Linden \cite{[EV2012]} (and references therein) for more general contexts. The term algebraically-central was used for a different notion in \cite{[JK2000a]}.
	\item The equivalence of ``classically-central'' and ``algebraically-central'' would probably be considered obvious by Fr\"ohlich and Lue, and it is easy indeed. However, it is not even mentioned, neither in \cite{[F1963]} nor in \cite{[L1967]}.
	\item To say that $\alpha:A\to B$ is categorically-central is the same as to say that it is central in $\mathsf{Alg}(K)$ with respect to $\mathsf{CommAlg}(K)$, or, equivalently, normal in $\mathsf{Alg}(K)$ with respect to $\mathsf{CommAlg}(K)$ in the sense of \cite{[JK1994]} (see Subsection 1.3 therein and the results it refers to). The equivalence of ``algebraically-central'' and ``categorically-central'' is a special case of a part of Theorem 5.2 in \cite{[JK1994]}.   
\end{itemize}
The purpose of this paper is three-fold:
\begin{itemize}
	\item [(i)] To prove the equivalence of ``classically-central'' and ``categorically-central'' directly, not using the intermediate notion of algebraically-central extension (Section 3). We will use instead what one might call the theory of split extensions of $K$-algebras (presented in Section 2); it is quite simple and certainly known for a very long time, although it does not seem to be ever presented explicitly.
	\item [(ii)] To introduce a new notion of \textit{weakly action representable} (semi-abelian) category (Section 4) motivated by that proof, to show that every weakly action representable category is action accessible in the sense of \cite{[BJ2009]}, and to make remarks (Section 5) on what we call \textit{initial weak representations of actions}, which exists, for instance, in every total weakly action representable category.
	\item [(iii)] To formulate related open questions (Section 6).
\end{itemize}
Sections 2 (most of which rephrases a small, and in fact known before, part of Section 2 of \cite{[BJK2005a]}) and 3 are self-contained, while Section 4 requires familiarity with \textit{protomodular} \cite{[B1991]}, \textit{semi-abelian} \cite{[JMT2002]}, \textit{action representable} \cite{[BJK2005]} (where a longer name ``has representable object actions'' was used), and \textit{action accessible} \cite{[BJ2009]} categories.

\section{Split extensions of algebras}

An \textit{action} (of $K$-algebras) is a system $(B,X,l,r)$ in which $B$ and $X$ are $K$-algebras and $l:B\to\mathrm{End}_K(X)$ and $r:B\to\mathrm{End}_K(X)^{\mathrm{op}}$ are $K$-algebra homomorphisms written as
$$l(b)(x)=bx\,\,\,\text{and}\,\,\,r(b)(x)=xb$$
and satisfying the four equalities:
$$b(xb')=(bx)b',\,\,\,b(xx')=(bx)x',\,\,\,x(bx')=(xb)x',\,\,\,x(x'b)=(xx')b$$
for all $b,b'\in B$ and $x,x'\in X$. Here $\mathrm{End}_K(X)$ denotes the $K$-algebra of $K$-module endomorphisms of $X$. Note that since we required $l$ and $r$ to be homomorphisms we also have
$$b(b'x)=(bb')x,\,\,\,x(bb')=(xb)b',$$
and so we simply have \textit{mixed associativity}
$$u(vw)=(uv)w$$
for all $u,v,w\in B\cup X$ (assuming $B$ and $X$ to be disjoint).  

A \textit{split extension} (of $K$-algebras) is a diagram $(A,B,X,\alpha,\beta,\kappa)=$
$$\xymatrix{X\ar[r]^-\kappa&A\ar@<0.7ex>[r]^-\alpha&B\ar[l]^-\beta}$$ 
in $\mathsf{Alg}(K)$, in which $\alpha\beta=1_B$ and $(X,\kappa)$ is a kernel of $\alpha$.

Given an action $(B,X,l,r)$, the \textit{associated split epimorphism} is defined as the diagram $(B\times^{(l,r)}X,B,X,\pi_1,\iota_1,\iota_2)=$
$$\xymatrix{X\ar[r]^-{\iota_2}&B\times^{(l,r)}X\ar@<0.7ex>[r]^-{\pi_1}&B\ar[l]^-{\iota_1},}$$
in which:
\begin{itemize}
	\item $B\times^{(l,r)}X=B\times X$ as $K$-modules;
	\item the multiplication on $B\times^{(l,r)}X$ is defined by 
	$$(b,x)(b',x')=(bb',bx'+xb'+xx');$$
	\item $\pi_1$, $\iota_1$, and $\iota_2$ are defined by $\pi_1(b,x)=b$, $\iota_1(b)=(b,0)$, and $\iota_2(x)=(0,x)$, respectively.
\end{itemize}

Conversely, given a split extension $(A,B,X,\alpha,\beta,\kappa)$, the associated action is defined as $(B,X,l,r)$, where $l$ and $r$ are uniquely determined the equalities 
$$\kappa(l(b)(x))=b\kappa(x)\,\,\,\text{and}\,\,\,\kappa(r(b)(x))=\kappa(x)b,$$ 
respectively.
This determines a span equivalence
$$\xymatrix{&\mathsf{Act}_K\ar[dl]_{(B,X,l,r)\mapsto B}\ar@<-0.4ex>[dd]\ar[dr]^{(B,X,l,r)\mapsto X}\\\mathsf{Alg}_K&&\mathsf{Alg}_K\\&\mathsf{SplExt}_K\ar[ul]^{(A,B,X,\alpha,\beta,\kappa)\mapsto B\,\,\,\,\,}\ar@<-0.4ex>[uu]\ar[ur]_{\,\,\,\,\,(A,B,X,\alpha,\beta,\kappa)\mapsto X}}$$
between the spans of actions and of split extensions. Here a morphism  $$(f,g,h):(A,B,X,\alpha,\beta,\kappa)\to(A',B',X',\alpha',\beta',\kappa')$$
of spans is a triple $(f,g,h)$, or, more formally a diagram 
$$\xymatrix{X\ar[d]_h\ar[r]^-\kappa&A\ar[d]_f\ar@<0.4ex>[r]^-\alpha&B\ar[d]^g\ar@<0.4ex>[l]^-\beta\\X'\ar[r]_-{\kappa'}&A'\ar@<0.4ex>[r]^-{\alpha'}&B'\ar@<0.4ex>[l]^-{\beta'}}$$
in which $\kappa'h=f\kappa$, $\alpha'f=g\alpha$, and $f\beta=\beta'g$, while a morphism 
$$(g,h):(B,X,l,r)\to(B',X',l',r')$$
has $g:B\to B'$ and $h:X\to X'$ satisfying $h(bx)=g(b)h(x)$ and $h(xb)=h(x)g(b)$. And, under the functors represented by the vertical arrow in the diagram above, $(f,g,h)\in\mathsf{SplExt}_K$ corresponds to $(g,h)\in\mathsf{Act}_K$. Nore also that, in the triple $(f,g,h)$, $g$ anf $h$ determine $f$ by 
$$f(\kappa(x)+\beta(b))=\kappa'h(x)+\beta'g(b),$$
having in mind that every $a\in A$ can be (uniquely) presented as $a=\kappa(x)+\beta(b)$ with $\kappa(x)=b-\beta\alpha(a)$ and $b=\alpha(a)$.

\begin{prop}
	Let $(B,X,l,r)$ be an action, $g:B\to B'$ a surjective $K$-algebra homomorphism, and $l':B'\to\mathrm{End}_K(X)$ and $r':B'\to\mathrm{End}_K(X)^{\mathrm{op}}$ maps (of sets) with $l=l'g$ and $r=r'g$. Then $(B',X,l',r')$ be an action, and (obviously) 
	$$(g,1_X):(B,X,l,r)\to(B',X,l',r')$$
	is a morphism of actions. 
\end{prop}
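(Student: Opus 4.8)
The plan is a routine transport of structure along the surjection $g$. The one thing to exploit is that, since $g$ is surjective and $l=l'g$, $r=r'g$, every element of $B'$ is of the form $g(b)$ for some $b\in B$, and then $l'(g(b))=l(b)$ and $r'(g(b))=r(b)$ directly by the hypotheses. All the required identities in $B'$ will be obtained by choosing $g$-preimages, computing in $B$, and transporting the conclusion back along $g$; the $g$-preimages may be chosen independently in each identity, since the outcome is in any case forced by the factorization equations.

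First I would check that $l'$ and $r'$ are $K$-algebra homomorphisms. For the multiplicativity of $l'$: given $b_1',b_2'\in B'$, pick $b_1,b_2\in B$ with $g(b_i)=b_i'$; since $g$ is a homomorphism, $g(b_1b_2)=b_1'b_2'$, so $l'(b_1'b_2')=l(b_1b_2)=l(b_1)l(b_2)=l'(b_1')l'(b_2')$, where the middle equality uses that $l$ is a homomorphism. Additivity of $l'$, its compatibility with the $K$-module structure, and the corresponding three statements for $r'$ are verified in exactly the same way.

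Next I would verify the four mixed-associativity equalities for $(B',X,l',r')$. Given $b_1',b_2'\in B'$ and $x,x'\in X$, write $b_i'=g(b_i)$; by $l=l'g$ and $r=r'g$ one has $b_i'x=b_ix$ and $xb_i'=xb_i$ as elements of $X$, so each of the four identities for $B'$ turns, after this substitution, into the corresponding identity for the action $(B,X,l,r)$, and hence holds. The two further identities $b_1'(b_2'x)=(b_1'b_2')x$ and $x(b_1'b_2')=(xb_1')b_2'$ then hold automatically, being consequences of $l'$ and $r'$ being homomorphisms, exactly as in the remark following the definition of an action. Finally, $(g,1_X)$ being a morphism of actions amounts to $1_X(bx)=g(b)\,1_X(x)$ and $1_X(xb)=1_X(x)\,g(b)$, i.e.\ $l(b)(x)=l'(g(b))(x)$ and $r(b)(x)=r'(g(b))(x)$, which are precisely the hypotheses $l=l'g$ and $r=r'g$.

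I do not expect any real obstacle: surjectivity of $g$ is exactly what lets one reduce every identity in $B'$ to the already-known identity in $B$, and without it $l'$ and $r'$ would not even be forced to be homomorphisms. The only mild care needed is in the bookkeeping of preimages, and, as noted above, this is harmless because the factorization equations make the relevant $X$-valued expressions independent of the chosen preimage.
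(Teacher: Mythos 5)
Your proof is correct and follows essentially the same route as the paper: first deduce that $l'$ and $r'$ are $K$-algebra homomorphisms from surjectivity of $g$ together with $l=l'g$, $r=r'g$, and then reduce the four mixed-associativity identities in $B'$ to the corresponding identities for $(B,X,l,r)$ by choosing preimages under $g$. You merely spell out the bookkeeping (and the ``morphism of actions'' claim, which the paper dismisses as obvious) in more detail.
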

\begin{proof}
	First note that, since $g$ is a surjective $K$-algebra homomorphism and $l$ and $r$ are $K$-algebra homomorphisms, the equalities $l=l'g$ and $r=r'g$ imply that $l'$ and $r'$ are $K$-algebra homomorphisms.
	After that, since $g$ is surjective, it suffices to verify the equalities
	$$g(b)(xg(b'))=(g(b)x)g(b'),\,\,\,g(b)(xx')=(g(b)x)x'$$
	$$x(g(b)x')=(xg(b))x',\,\,\,x(x'g(b))=(xx')g(b)$$
	for all $b,b'\in B$ and $x,x'\in X$; here $g(b)x=l'(g(b))(x)=l(b)(x)=bx$, etc. However, these four equalities immediately follow from the four equalities given at the beginning of this section.
\end{proof} 

\section{Symmetric actions and central extensions}

Let us call an action $(B,X,l,r)$ \textit{symmetric}, if $l=r$. 

\begin{lem}
	Let $(A,B,X,\alpha,\beta,\kappa)$ and $(B,X,l,r)$ be a split extension and an action corresponding to each other. Then the following conditions are equivalent:
	\begin{itemize}
		\item [(a)] $\alpha$ is classically-central;
		\item [(b)] $a\kappa(x)=\kappa(x)a$ for all $a\in A$ and $x\in X$;
		\item [(c)] $(b,x)(0,x')=(0,x')(b,x)$ in $B\times^{(l,r)}X$ for all $b\in B$ and $x,x'\in X$;
		\item [(d)] $X$ is commutative and $(B,X,l,r)$ is symmetric;
		\item [(e)] condition (d) holds and $l:B\to\mathrm{End}_K(X)$ factors through $\eta_B:B\to I(B)$;
		\item [(f)] $X$ is commutative and there exist a symmetric action of the form $(I(B),X,\bar{l},\bar{r})$ such that
		            $$(\eta_B,1_X):(B,X,l,r)\to(I(B),X,\bar{l},\bar{r})$$
		            is a morphism in $\mathsf{Act}_K$;
		\item [(g)] condition (f) holds with $I(B)\times^{(\bar{l},\bar{r})}X$ being commutative.
	\end{itemize}
\end{lem}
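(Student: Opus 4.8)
The plan is to prove the cyclic chain of implications $(a)\Rightarrow(b)\Rightarrow(c)\Rightarrow(d)\Rightarrow(e)\Rightarrow(f)\Rightarrow(g)\Rightarrow(a)$, exploiting the span equivalence between $\mathsf{Act}_K$ and $\mathsf{SplExt}_K$ at every stage so that statements about the action $(B,X,l,r)$ can be freely translated into statements about the semidirect product $B\times^{(l,r)}X$ and conversely. Here $I(B)$ abbreviates $\mathrm{Comm}(B)$ (the commutative reflection), so $\eta_B\colon B\to I(B)$ is the unit of the reflection.

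First I would treat the elementary equivalences. For $(a)\Rightarrow(b)$: every $a\in A$ with $\alpha(a)=0$ is of the form $\kappa(x)$ since $(X,\kappa)$ is a kernel of $\alpha$, so classical centrality immediately gives $a'\kappa(x)=\kappa(x)a'$ for arbitrary $a'\in A$. Conversely $(b)\Rightarrow(a)$ is the special case where the element killed by $\alpha$ is written via $\kappa$, so actually $(a)\Leftrightarrow(b)$ is trivial; I would nonetheless keep the cyclic arrangement. For $(b)\Leftrightarrow(c)$: under the span equivalence $A\cong B\times^{(l,r)}X$ with $\kappa=\iota_2$, $\beta=\iota_1$, and every element of $A$ written as $(b,x)=\iota_1(b)+\iota_2(x)$; condition $(c)$ is exactly $(b)$ transported along this isomorphism, using that $\iota_2(x')$ ranges over the kernel. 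For $(c)\Leftrightarrow(d)$: expanding $(b,x)(0,x')=(0,\,bx'+xx')$ and $(0,x')(b,x)=(0,\,x'b+x'x)$ using the multiplication formula, equality for all $b,x,x'$ is equivalent to $bx'=x'b$ for all $b,x'$ (take $x=0$) together with $xx'=x'x$ for all $x,x'$ (take $b=0$); the first says $l=r$, i.e. the action is symmetric, and the second says $X$ is commutative.

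Next the reflection-theoretic equivalences, which are the substantive part. For $(d)\Rightarrow(e)$: assuming $X$ commutative and $l=r$, I claim $l\colon B\to\mathrm{End}_K(X)$ lands in the \emph{commutative} subalgebra generated by its image, because the mixed-associativity relations $b(xb')=(bx)b'$ together with symmetry force $l(b)l(b')=l(b')l(b)$ as endomorphisms of $X$ (evaluating on $x$: $b(b'x)=(bb')x$ and one checks $bb'$ and $b'b$ act the same after using $x(b'b)=(xb')b$ and commutativity of $X$; this is the one spot needing a careful little computation). Hence $l$ factors through the universal commutative quotient $\eta_B\colon B\to I(B)$ by the universal property of the reflection, giving $\bar l$ with $l=\bar l\,\eta_B$; set $\bar r=\bar l$. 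The implication $(e)\Rightarrow(f)$ is then a matter of checking that $(I(B),X,\bar l,\bar r)$ satisfies the four action axioms — which follows from Proposition 2.4 applied to the surjection $\eta_B$ (note $\bar l\eta_B=l$ and $\bar r\eta_B=r$, and $\bar l=\bar r$), so it is automatically an action and $(\eta_B,1_X)$ is automatically a morphism of actions. For $(f)\Rightarrow(g)$: since $\bar l=\bar r$ and $X$ is commutative, the multiplication $(u,x)(u',x')=(uu',\,\bar l(u)(x')+\bar l(u')(x)+xx')$ on $I(B)\times^{(\bar l,\bar r)}X$ is manifestly symmetric in the two factors because $I(B)$ is commutative, $X$ is commutative, and the cross terms are symmetric; hence the semidirect product is commutative. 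Finally $(g)\Rightarrow(a)$: a morphism of actions $(\eta_B,1_X)$ induces, via the span equivalence, a morphism of split extensions $(f,\eta_B,1_X)$ from $(A,B,X,\alpha,\beta,\kappa)$ to the split extension associated with $(I(B),X,\bar l,\bar r)$, i.e. to $I(B)\times^{(\bar l,\bar r)}X$; the map $f$ restricts to the identity on $X$ (it sends $\kappa(x)+\beta(b)\mapsto \iota_2(x)+\iota_1(\eta_B(b))$). If $a,a'\in A$ with $\alpha(a')=0$, write $a'=\kappa(x')$; then $f(aa'-a'a)=f(a)f(a')-f(a')f(a)=0$ by commutativity of the codomain, and $aa'-a'a=\kappa(x)$ for some $x\in X$ (it lies in $\mathrm{Ker}\,\alpha$), while $f\kappa=\kappa'$ is injective on $X$, so $x=0$ and $aa'=a'a$; this is classical centrality.

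The main obstacle I anticipate is the implication $(d)\Rightarrow(e)$: showing that symmetry plus commutativity of $X$ actually forces $l(b)$ and $l(b')$ to commute in $\mathrm{End}_K(X)$, so that $l$ factors through $I(B)$. This is where one must use the full strength of the mixed-associativity axioms $b(xb')=(bx)b'$ and $x(x'b)=(xx')b$ rather than just the two homomorphism conditions, and it is the only place where the argument is not a formal translation across the span equivalence. Everything else is bookkeeping with the explicit semidirect-product formula and the universal property of $\mathrm{Comm}$.
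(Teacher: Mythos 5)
Your proof is correct and follows essentially the same route as the paper: reduce to the semidirect product $B\times^{(l,r)}X$, translate (a)--(c) into (d) by expanding the multiplication, establish (d)$\Rightarrow$(e) via the computation $(bb')x=b(b'x)=(b'x)b=b'(xb)=b'(bx)=(b'b)x$ (which, note, needs only mixed associativity and symmetry, not commutativity of $X$), obtain (f) from Proposition 2.1 (your ``Proposition 2.4'') applied to the surjection $\eta_B$, and verify commutativity of $I(B)\times^{(\bar{l},\bar{r})}X$ directly from the commutativity of $I(B)$ and $X$. The only organizational difference is that you close the cycle with an explicit (g)$\Rightarrow$(a) argument through the commutative codomain, whereas the paper treats (f)$\Rightarrow$(e) as obvious and lets the already-established equivalences carry the rest.
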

\begin{proof}
	(a)$\Leftrightarrow$(b) is obvious.
	
	Without loss of generality we can assume 
	$$(A,B,X,\alpha,\beta,\kappa)=(B\times^{(l,r)}X,B,X,\pi_1,\iota_1,\iota_2),$$
	which makes (b)$\Leftrightarrow$(c) obvious too.
	
	(c)$\Leftrightarrow$(d) follows from
	$$(b,x)(0,x')=(0,x')(b,x)\Leftrightarrow bx'+xx'=x'b+x'x,$$
	since it holds for all $b\in B$ (including the case $b=0$) and $x,x'\in X$.
	
	(d)$\Leftrightarrow$(e): Since $l=r$ is a $K$-algebra homomorphism, it suffices to probe that $l(bb')=l(b'b)$ for all $b,b'\in B$. We have
	$$l(bb')(x)=(bb')x=b(b'x)=(b'x)b=b'(xb)=b'(bx)=(b'b)x=l(b'b)(x),$$
	using the mixed associativity and the symmetry.
	
	(e)$\Rightarrow$(f) follows from Proposition 2.1, while (f)$\Rightarrow$(e) is obvious.
	
	Now it remains to show that conditions (a)-(f) imply the commutativity of $I(B)\times^{(\bar{l},\bar{r})}X$, but this commutativity easily follows from (c) and the commutativity of $X$ and of $I(B)$. 
\end{proof}

\begin{lem}
	Let
	$$\xymatrix{A\ar[d]_f\ar[r]^\alpha&B\ar[d]^g\\A'\ar[r]_{\alpha'}&B'}$$
	be a pullback diagram in $\mathsf{Alg}(K)$. Then 
	\begin{itemize}
		\item [(a)] If $\alpha'$ is classically-central, then $\alpha$ is classically-central;
		\item [(b)] If $\alpha$ is classically-central and $g$ is surjective, then $\alpha'$ is classically-central.
	\end{itemize}
\end{lem}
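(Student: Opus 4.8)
The plan is to reduce everything to the standard componentwise description of pullbacks in $\mathsf{Alg}(K)$ and then run an elementwise argument in the same style as the preceding proofs. Without loss of generality I would take $A$ to be the subalgebra
$$A=A'\times_{B'}B=\{(a',b)\in A'\times B\mid\alpha'(a')=g(b)\}$$
of the product algebra $A'\times B$, with $f$ and $\alpha$ the two projections, so that the multiplication on $A$ is componentwise, $(a',b)(a'',b'')=(a'a'',bb'')$. Before touching either part I would record the relevant surjectivity facts: $\alpha$ is a regular epimorphism whenever $\alpha'$ is (pullback-stability of regular epimorphisms in the regular category $\mathsf{Alg}(K)$), and, when $g$ is surjective, both $f$ and $\alpha'$ are surjective (for $f$, again pullback-stability; for $\alpha'$, because $g(B)=B'$ forces $\mathrm{Im}(\alpha')=B'$ once $\alpha$ is surjective). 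These are needed only so that the word ``classically-central'' applies to the maps in question; all of them are in any case immediate from the displayed description of $A$.

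For part (a): assume $\alpha'$ is classically-central and take $a=(a',b)\in A$ and $a_2=(a_2',b_2)\in A$ with $\alpha(a_2)=0$, i.e. $b_2=0$; then $\alpha'(a_2')=g(b_2)=0$, so $a_2'\in\mathrm{Ker}(\alpha')$, and classical centrality of $\alpha'$ gives $a'a_2'=a_2'a'$. Since multiplication in $A$ is componentwise, $aa_2=(a'a_2',0)=(a_2'a',0)=a_2a$, which is exactly what (a) asserts.

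For part (b): assume $\alpha$ is classically-central and $g$ is surjective, so that $f$ and $\alpha'$ are surjective. Given $a_1',a_2'\in A'$ with $\alpha'(a_2')=0$, the pair $a_2:=(a_2',0)$ lies in $A$, with $\alpha(a_2)=0$ and $f(a_2)=a_2'$, while surjectivity of $f$ produces some $a_1\in A$ with $f(a_1)=a_1'$. Classical centrality of $\alpha$ gives $a_1a_2=a_2a_1$ in $A$; applying the homomorphism $f$ yields $a_1'a_2'=a_2'a_1'$, as required.

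I do not expect a genuine obstacle here: the whole content is the observation that the multiplication of the pullback is componentwise, together with the fact that, under $f$, the kernel of $\alpha$ maps isomorphically onto the kernel of $\alpha'$. The one point worth stating carefully is the surjectivity/regular-epi bookkeeping — in particular that the hypothesis ``$g$ surjective'' in part (b) is used precisely to guarantee that $\alpha'$ is a regular epimorphism (so that it can be classically-central at all) and that $f$ is surjective (so that $a_1'$ can be lifted to $A$).
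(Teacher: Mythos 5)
Your proof is correct and is exactly the ``straightforward calculation'' that the paper's one-line proof alludes to: identify the pullback with the componentwise subalgebra of $A'\times B$ and chase elements, using pullback-stability of surjections for the bookkeeping. Nothing further is needed.
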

\begin{proof}
	Just use straightforward calculation.
\end{proof}

\begin{teo}
	For a split epimorphism $\alpha:A\to B$ of $K$-algebras, the following conditions are equivalent:
	\begin{itemize}
		\item [(a)] $\alpha$ is classically-central;
		\item [(b)] condition (a) holds and there is a morphism of split extensions of the form
		$$\xymatrix{X\ar@{=}[d]\ar[r]&A\ar[d]_f\ar@<0.4ex>[r]^-\alpha&B\ar[d]^{\eta_B}\ar@<0.4ex>[l]\\X\ar[r]&A'\ar@<0.4ex>[r]^-{\alpha'}&I(B)\ar@<0.4ex>[l]}$$
		which, once the top row $(A,B,X,\alpha,\beta,\kappa)$ is chosen for the given $\alpha$, corresponds to the morphism of actions displayed in Lemma 3.1(f);
		\item [(c)] the diagram
		$$\xymatrix{A\ar[d]_{\eta_A}\ar[rr]^\alpha&&B\ar[d]^{\eta_B}\\\mathrm{Comm}(A)\ar[rr]_{\mathrm{Comm}(\alpha)}&&\mathrm{Comm}(B)}$$
		is a pullback;
		\item [(d)] there exists a pullback diagram of the form
		$$\xymatrix{A\ar[d]_f\ar[r]^\alpha&B\ar[d]^g\\A'\ar[r]_{\alpha'}&B'}$$
		with commutative $A'$ and $B'$;
		\item [(e)] conditions (d) holds with all the homomorphisms involved being surjective.    
	\end{itemize}
\end{teo}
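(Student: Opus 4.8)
The plan is to treat condition~(a) as the hub, establishing (a)$\Leftrightarrow$(b), the loop (a)$\Rightarrow$(e)$\Rightarrow$(d)$\Rightarrow$(a), and (a)$\Rightarrow$(c)$\Rightarrow$(d); together these give all five equivalences. Throughout I would assume, without loss of generality, that $(A,B,X,\alpha,\beta,\kappa)=(B\times^{(l,r)}X,B,X,\pi_1,\iota_1,\iota_2)$ is the split extension associated with its action $(B,X,l,r)$, so that every $a\in A$ is a pair $(b,x)$. Since (b) contains (a), the implication (b)$\Rightarrow$(a) is trivial. For (a)$\Rightarrow$(b) I would use Lemma~3.1: condition~(a) is equivalent to~3.1(f), which supplies a symmetric action $(I(B),X,\bar l,\bar r)$ together with the morphism $(\eta_B,1_X)$ of actions; applying the span equivalence of Section~2 to this morphism produces a morphism of split extensions whose bottom row is the split extension associated with $(I(B),X,\bar l,\bar r)$, so that $A'=I(B)\times^{(\bar l,\bar r)}X$ and $\alpha'=\pi_1$, whose middle vertical map is $\eta_B$, and whose remaining vertical map is therefore forced to be $f(b,x)=(\eta_B(b),x)$ --- exactly the diagram in~(b).

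Next I would show that the square in~(b), namely
$$\xymatrix{A\ar[d]_f\ar[r]^\alpha&B\ar[d]^{\eta_B}\\A'\ar[r]_{\alpha'}&I(B),}$$
is a pullback in $\mathsf{Alg}(K)$: the comparison homomorphism $A\to B\times_{I(B)}A'$ sends $(b,x)$ to $(b,(\eta_B(b),x))$ and is visibly bijective, while the check that it preserves multiplication collapses, via $l=\bar l\eta_B$ and $r=\bar r\eta_B$ (Lemma~3.1(e)), to a direct comparison of the two multiplication formulas. In this square $\alpha$ and $\alpha'=\pi_1$ are split epimorphisms, $\eta_B$ is surjective (being the unit of a reflection onto a subvariety), and $f$ is surjective because $\eta_B$ is; moreover $I(B)$ is commutative and $A'=I(B)\times^{(\bar l,\bar r)}X$ is commutative by Lemma~3.1(g). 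Hence (a)$\Rightarrow$(e), and (e)$\Rightarrow$(d) is immediate. For (d)$\Rightarrow$(a) I would invoke Lemma~3.2(a): a commutative $A'$ certainly satisfies $aa'=a'a$ for all $a,a'$ with $\alpha'(a')=0$, so $\alpha'$ is classically-central and therefore so is $\alpha$. This closes the loop (a)$\Leftrightarrow$(d)$\Leftrightarrow$(e).

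It remains to connect~(c). The implication (c)$\Rightarrow$(d) is trivial since $\mathrm{Comm}(A)$ and $\mathrm{Comm}(B)=I(B)$ are commutative. For (a)$\Rightarrow$(c) I would identify the canonical reflection square with the concrete pullback square above, by proving that $f\colon A\to A'$ exhibits $A'$ as $\mathrm{Comm}(A)$ and $\alpha'$ as $\mathrm{Comm}(\alpha)$. Since $A'$ is commutative, $f$ factors through $\eta_A$, and it suffices to prove $\ker f=R(A)$, i.e.\ that the ideal $R(B)\times 0=\ker f$ of $A$ is the commutator ideal of $A$. Here classical centrality enters through Lemma~3.1(d): commutativity of $X$ together with symmetry of the action force every commutator of $A$ to have the form $[(b,x),(b',x')]=([b,b'],0)$, and, because $l=\bar l\eta_B$ annihilates $R(B)=\ker\eta_B$, the ideal of $A$ generated by these elements is precisely $R(B)\times 0$. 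Thus the induced map $\mathrm{Comm}(A)\to A'$ is an isomorphism, and~(c) follows from the pullback property already established.

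The step I expect to be the main obstacle is this last one: carrying out the commutator computation in $B\times^{(l,r)}X$ under the centrality hypothesis and thereby pinning down simultaneously $R(A)$ and the internal ideal structure of $R(B)\times 0$ inside $A$, which is what makes the canonical square of~(c) coincide with the pullback square built by hand. Everything else is bookkeeping through the span equivalence of Section~2 together with two short direct verifications (bijectivity plus multiplicativity of the comparison map, and surjectivity of the four homomorphisms involved).
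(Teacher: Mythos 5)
Your proof is correct, and most of it runs along the same lines as the paper's: (a)$\Leftrightarrow$(b) via Lemma 3.1, the construction of the pullback square with surjective maps into commutative algebras for (a)$\Rightarrow$(e) (the paper justifies the pullback property by noting that $\alpha$ and $\alpha'$ have canonically isomorphic kernels, where you instead write down the comparison isomorphism explicitly --- both are fine), and (c)$\Rightarrow$(d) together with (d)$\Rightarrow$(a) via Lemma 3.2(a). Where you genuinely diverge is in how condition (c) is attached to the rest: the paper obtains (c)$\Leftrightarrow$(e) by citing Theorem 3.4(d) and (4.2) of the Janelidze--Kelly paper \cite{[JK1994]}, a general fact about reflections onto Birkhoff subcategories, whereas you prove (a)$\Rightarrow$(c) directly by identifying the hand-built square with the canonical reflection square --- computing that under centrality every commutator of $B\times^{(l,r)}X$ equals $([b,b'],0)$, that the action annihilates $R(B)$ so the ideal these generate is exactly $R(B)\times 0=\ker f$, and hence that $f$ is (up to isomorphism) $\eta_A$. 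That computation is correct and is the one genuinely new ingredient of your argument; it buys a self-contained, purely computational proof of the hardest implication at the cost of a concrete calculation, while the paper's citation buys brevity and a argument that would transfer verbatim to other Birkhoff subcategories. Your overall logical scheme ((a) as hub, with the loops (a)$\Rightarrow$(e)$\Rightarrow$(d)$\Rightarrow$(a) and (a)$\Rightarrow$(c)$\Rightarrow$(d)) is complete and sound.
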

\begin{proof}
	(a)$\Leftrightarrow$(b) follows from Lemma 3.1.
	
	(b)$\Rightarrow$(e) follows from the fact that the square in (b) formed by $\alpha$, $f$, $\eta_B$, and $\alpha'$ is a pullback (since $\alpha$ and $\alpha'$ have canonically isomorphic kernels), and $A'$ in (b) is commutative by Lemma 3.1(a)$\Rightarrow$(g) and (a)$\Rightarrow$(b).
	
    (c)$\Leftrightarrow$(e) follows from Theorem 3.4(d) and (4.2) of \cite{[JK1994]}.
    
    (c)$\Rightarrow$(d) is trivial and (d)$\Rightarrow$(a) follows from Lemma 3.2(a). 	
\end{proof}

\begin{teo}
	For a reqular epimorphism (=surjective homomorphism) $\alpha:A\to B$ of $K$-algebras, the following conditions are equivalent:
	\begin{itemize}
		\item [(a)] $\alpha$ is classically-central;
		\item [(b)] $\alpha$ is categorically-central.
	\end{itemize}
\end{teo}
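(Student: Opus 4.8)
The plan is to reduce this general (surjective) statement to Theorem~3.3, which already disposes of the split case, by passing to the kernel pair of $\alpha$. Let $\pi=\pi_1\colon A\times_B A\to A$ be the first kernel pair projection. It is a split epimorphism, with section the diagonal $\Delta\colon A\to A\times_B A$, $\Delta(a)=(a,a)$, so Theorem~3.3 applies to $\pi$.

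The first step is to check that $\alpha$ is classically-central if and only if $\pi$ is classically-central. The kernel of $\pi$ is $\{(0,c)\mid c\in\ker\alpha\}$, and for $(a,a')\in A\times_B A$ the equality $(a,a')(0,c)=(0,c)(a,a')$ in $A\times_B A$ reduces, componentwise, to $a'c=ca'$. As $(a,a')$ ranges over $A\times_B A$ the second coordinate $a'$ ranges over all of $A$ (take $a=a'$), so classical-centrality of $\pi$ says precisely that $ca'=a'c$ for all $c\in\ker\alpha$ and all $a'\in A$, which is the defining condition for $\alpha$ to be classically-central.

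The second step is to invoke Theorem~3.3, specifically the equivalence of its conditions (a) and (c), for the split epimorphism $\pi\colon A\times_B A\to A$: this tells us that $\pi$ is classically-central exactly when the square formed by $\pi$, $\eta_{A\times_B A}$, $\mathrm{Comm}(\pi)$ and $\eta_A$ is a pullback. But that square is (up to transposition, which does not affect being a pullback) the very square of Definition~1.1(c), with the choice of $\pi_1$ rather than $\pi_2$ being immaterial by the symmetry of $A\times_B A$. Hence $\pi$ is classically-central if and only if $\alpha$ is categorically-central. Composing this with the first step gives (a)$\Leftrightarrow$(b).

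I expect the only point needing care to be the first step: making sure the classical-centrality conditions for $\alpha$ and for $\pi$ really coincide, and that working with a single kernel pair projection loses nothing. This is routine bookkeeping rather than a genuine obstacle; all the real content of the theorem is already carried by Theorem~3.3.
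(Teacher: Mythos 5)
Your proof is correct and follows essentially the same route as the paper: pass to the kernel pair projection $\pi$, which is split by the diagonal, and apply Theorem~3.3(a)$\Leftrightarrow$(c). The only cosmetic difference is that your first step verifies directly (and correctly) the equivalence of classical-centrality for $\alpha$ and for $\pi$, whereas the paper delegates exactly this to Lemma~3.2 applied to the kernel-pair pullback square.
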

\begin{proof}
	Just note that, for any $K$-algebra homomorphism $A\to B$, any of the pullback projections $A\times_BA\to A$ is a split epimorphism, and apply Lemma 3.2 and Theorem 3.3.
\end{proof}
The readers should forgive me for the display of diagram in Theorem 3.3(c) being rotated in comparison with the display of diagram in Definition 1.1(c): it is done so due to the tradition is displaying split extensions coming from homological algebra.

In ring theory, the centre of a $K$-algebra $A$ is always defined as the $K$-subalgebra $\{c\in A\mid\forall_{a\in A}\,ac=ca\}$ of $A$. Contrary to that, Theorem 3.4 suggests to define the centre of $A$ as the largest ideal $X$ of $A$ with $ax=xa$ for all $a\in A$ and $x\in X$. This brings us closer to commutator theory in universal algebra, according to which the centre of $A$ should be defined as the largest ideal $X$ of $A$ with $ax=0=xa$ for all $a\in A$ and $x\in X$.  	

\section{Weakly action representable categories} 

In this section we will consider split extensions in a fixed semi-abelian category $\mathscr{C}$, using, as far as possible, the same notation as for split extensions of $K$-algebras. Given an object $X$ in $\mathscr{C}$, we will consider the functor
$$\mathrm{SplExt}(-,X):\mathscr{C}^{\mathrm{op}}\to\mathsf{Sets},$$
where, for $X\in\mathscr{C}$, $\mathrm{SplExt}(B,X)$ is the set of isomorphism classes of split extensions $(A,B,X,\alpha,\beta,\kappa)$ with fixed $B$ and $X$, exactly as \cite{[BJK2005]}, except that following the style of notation of \cite{[BJK2005]} we would write $(B,A,\alpha,\beta,\kappa)$ instead of $(A,B,X,\alpha,\beta,\kappa)$. Recall that in this general context we also have an equivalence between split extensions and actions, and, in particilar, a canonical functor isomorphism $$\mathrm{SplExt}(-,X)\approx\mathrm{Act}(-,X)$$ (Theorem 6.2 of \cite{[BJK2005]}, which rephrases a special case of Theorem 3.4 of \cite{[BJ1998]}). Here an \textit{action} of $B$ on $X$, that is, an element of $\mathrm{Act}(B,X)$, is a triple $(B,X,\xi)$, in which 
$$\xymatrix{B\flat X\ar@{=}[d]\ar[r]^-\xi&X\\\mathrm{Ker}([1,0]:B+X\to B)}$$
satisfying suitable conditions (see Subsection 3.3 of \cite{[BJK2005]}). Let us also mention that in the case of $K$-algebras an action $(B,X,\xi)$ corresponds to the action $(B,X,l,r)$, in the sense of Section 2, defined by $l(b)(x)=\xi(b\otimes x)$ and $r(b)(x)=\xi(x\otimes b)$, using the description of $B\flat X$ in terms of tensor products.

\begin{defi}
	For an object $X$ in $\mathscr{C}$, a weak representation of actions on $X$ is a pair $(M,\mu)$ in which $M$ is an object in $\mathscr{C}$ and $$\mu:\mathrm{SplExt}(-,X)\to\mathrm{hom}(-,M)$$ is a monomorphism of functors. We will say that $\mathscr{C}$ is weakly action representable if every object in $\mathscr{C}$ has a weak representation of actions on it. 
\end{defi}

In the rest of this section $(M,\mu)$ will denote a fixed weak representation of actions on a given object $X$ in $\mathscr{C}$, unless stated otherwise. For a split extension $E=(A,B,X,\alpha,\beta,\kappa)$, the morphism $\mu_B([E]):B\to M$ will be called the \textit{acting morphism} corresponding to $E$. The following proposition immediately follows from our definitions and the fact that, in each such split extension, the morphisms $\beta$ and $\kappa$ are jointly epic:
 
\begin{prop}
	Let $E=(A,B,X,\alpha,\beta,\kappa)$ and $E'=(A',B',X,\alpha',\beta',\kappa')$ be split extensions. The following conditions on a morphism $g:B\to B'$ are equivalent:
	\begin{itemize}
		\item [(a)] there exists a morphism $f:A\to A'$ such that $(f,g,1_X):E\to E'$ is a morphism of split extensions;
		\item [(b)] there exists a unique morphism $f:A\to A'$ such that $(f,g,1_X):E\to E'$ is a morphism of split extensions;
		\item [(c)] $\mu_{B'}([E'])g=\mu_B([E])$.\qed
	\end{itemize}
\end{prop}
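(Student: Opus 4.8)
The plan is to unwind the definition of the monomorphism of functors $\mu$ and use the jointly-epic property of $(\beta,\kappa)$ that was already highlighted in the paragraph preceding the proposition. I would first observe that (b)$\Rightarrow$(a) is trivial, and that (a)$\Rightarrow$(b) is exactly the uniqueness statement: if $(f,g,1_X)$ and $(f',g,1_X)$ are both morphisms of split extensions $E\to E'$, then $f\beta=\beta'g=f'\beta$ and $f\kappa=\kappa' 1_X=f'\kappa$, so $f$ and $f'$ agree on the jointly epic pair $(\beta,\kappa)$, hence $f=f'$. So the substance is the equivalence of (a) and (c).

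For (a)$\Rightarrow$(c): a morphism of split extensions $(f,g,1_X):E\to E'$ is, in particular, a morphism in the category whose objects are split extensions with third component $X$ and whose arrows have identity in the $X$-slot; naturality of $\mu$ applied to $g:B\to B'$ gives a commuting square relating $\mu_B$ and $\mu_{B'}$, and chasing $[E']\in\mathrm{SplExt}(B',X)$ through it — using that $(f,g,1_X)$ exhibits $[E]$ as the image of $[E']$ under $\mathrm{SplExt}(g,X)$ — yields $\mu_{B'}([E'])\circ g=\mu_B([E])$. Concretely, $\mathrm{SplExt}(g,X)([E'])=[E]$ because pulling back $E'$ along $g$ produces a split extension isomorphic to $E$ (the comparison morphism being $f$, which is an isomorphism onto the pullback since kernels are preserved), and naturality of $\mu$ says $\mathrm{hom}(g,M)(\mu_{B'}([E']))=\mu_B(\mathrm{SplExt}(g,X)([E']))$, i.e. $\mu_{B'}([E'])\circ g=\mu_B([E])$.

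For (c)$\Rightarrow$(a): assume $\mu_{B'}([E'])\circ g=\mu_B([E])$. Form the pullback $E''$ of $E'$ along $g$; this is a split extension on $B$ with third object $X$ (kernels are stable under pullback in a semi-abelian category, and split epis are too), and it comes with a canonical morphism $(f'',g,1_X):E''\to E'$. By the naturality computation just done, $\mu_B([E''])=\mu_{B'}([E'])\circ g$, which by hypothesis equals $\mu_B([E])$. Since $\mu_B$ is injective (that is what ``$\mu$ is a monomorphism of functors'' gives us, componentwise), $[E'']=[E]$, i.e. there is an isomorphism $(h,1_B,1_X):E\to E''$. Composing, $(f''h,g,1_X):E\to E'$ is the desired morphism of split extensions, establishing (a).

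The main obstacle I anticipate is bookkeeping rather than conceptual: making precise that a morphism of split extensions with identity $X$-component corresponds exactly to the data ``a morphism $g:B\to B'$ together with the assertion that $[E]$ maps to $[E']$-pulled-back'' — in other words, that the functor $\mathrm{SplExt}(-,X)$ is genuinely the one whose naturality squares encode these morphisms. This is precisely the content already recorded in the excerpt (the span equivalence of Section 2 in the algebra case, Theorem 6.2 of \cite{[BJK2005]} in general, and the remark that $\beta,\kappa$ are jointly epic), so once those are invoked the argument is a short diagram chase; I would keep the proof itself to a few lines and let Proposition 4.2's own statement that it ``immediately follows'' carry the weight.
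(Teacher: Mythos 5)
Your proof is correct and follows exactly the route the paper intends: the paper gives no written proof, stating only that the proposition ``immediately follows from our definitions and the fact that $\beta$ and $\kappa$ are jointly epic,'' and your argument is precisely the fleshed-out version of that --- uniqueness of $f$ from the jointly epic pair $(\beta,\kappa)$, and the equivalence of (a) with (c) via the identification $\mathrm{SplExt}(g,X)([E'])=[E]$ (using the split short five lemma for the pullback comparison) together with naturality and componentwise injectivity of $\mu$. Nothing is missing.
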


The following corollary generalizes Corollary 1.5 of \cite{[BJ2009]}:

\begin{cor}
	A split extension $E=(A,B,X,\alpha,\beta,\kappa)$ is faithful in the sense of \cite{[BJ2009]} if and only if its corresponding acting morphism $\mu_B([E]):B\to M$ is a monomorphism.\qed
\end{cor}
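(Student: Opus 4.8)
The plan is to recall the definition of a faithful split extension from \cite{[BJ2009]} and then unwind it through Proposition 4.3. Recall that a split extension $E=(A,B,X,\alpha,\beta,\kappa)$ is called \emph{faithful} when, for every object $B'$ and every pair of morphisms $g_1,g_2:B'\to B$, the existence of morphisms $f_1,f_2$ making $(f_i,g_i,1_X)$ into a morphism of split extensions from the pullback extension $g_i^*(E)$ to $E$ — equivalently, the equality of the two actions of $B'$ on $X$ obtained by restricting the action of $B$ along $g_1$ and along $g_2$ — forces $g_1=g_2$. In other words, $E$ is faithful precisely when the assignment $g\mapsto g^*([E])$ from $\mathrm{hom}(B',B)$ to $\mathrm{SplExt}(B',X)$ is injective for every $B'$.

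First I would make this reformulation precise: $g^*([E])$ is exactly the isomorphism class of the split extension obtained by pulling $E$ back along $g$, and by naturality of $\mu$ together with Proposition 4.3 (applied with $g$ in the role there, the identity on $X$, and the pulled-back extension as $E'$), we have $\mu_{B'}(g^*([E]))=\mu_B([E])\circ g$. Hence $g_1^*([E])=g_2^*([E])$ if and only if $\mu_B([E])\circ g_1=\mu_B([E])\circ g_2$, using that $\mu_{B'}$ is injective (it is a component of the monomorphism of functors $\mu$). Therefore faithfulness of $E$ — injectivity of $g\mapsto g^*([E])$ for all $B'$ — is equivalent to the statement that $\mu_B([E])\circ g_1=\mu_B([E])\circ g_2$ implies $g_1=g_2$ for all $g_1,g_2:B'\to B$, which is exactly the assertion that $\mu_B([E]):B\to M$ is a monomorphism.

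The two implications then fall out directly. If $\mu_B([E])$ is a monomorphism, the displayed equivalence shows $g_1^*([E])=g_2^*([E])\Rightarrow g_1=g_2$, so $E$ is faithful. Conversely, if $E$ is faithful, then for any $g_1,g_2$ with $\mu_B([E])g_1=\mu_B([E])g_2$ the equivalence gives $g_1^*([E])=g_2^*([E])$, whence $g_1=g_2$; thus $\mu_B([E])$ is a monomorphism. I expect the only real subtlety — the main obstacle — to be checking that the definition of ``faithful'' from \cite{[BJ2009]} really is equivalent to injectivity of the pullback map $g\mapsto g^*([E])$, i.e.\ correctly translating between the ``two morphisms of split extensions agree'' formulation and the ``two restricted actions agree'' formulation; once that dictionary is fixed, Proposition 4.3 does all the work and everything else is formal. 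This also makes transparent why the result generalizes Corollary 1.5 of \cite{[BJ2009]}: in the action representable case $\mu$ is an isomorphism, so the acting morphism is literally the classifying morphism of the action, and monomorphy of the acting morphism is the original definition of faithfulness unwound.
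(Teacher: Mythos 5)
Your proof is correct and follows essentially the route the paper intends: the corollary is stated there without proof as an immediate consequence of the preceding proposition on morphisms $(f,g,1_X)$ (Proposition 4.2 in the paper, which you call 4.3), and your unwinding --- identifying faithfulness with injectivity of $g\mapsto g^{*}([E])$ and transporting this along the injective components of the monomorphism $\mu$ via naturality --- is exactly that argument. The only blemish is the garbled opening sentence of your recalled definition (the two morphisms must emanate from one common split extension, not from the two possibly different pullbacks $g_{i}^{*}(E)$), but your subsequent ``in other words'' reformulation and the proof itself use the correct notion.
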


For a diagram of the form
$$\xymatrix{\bullet\ar[d]\ar[r]&\bullet\ar[d]\ar@<0.4ex>[r]\ar@<-0.4ex>[r]&\bullet\ar[d]\\\bullet\ar[r]&\bullet\ar@<0.4ex>[r]\ar@<-0.4ex>[r]&\bullet}$$
in any category, it is (well known and) easy to see that the left-hand square is a pullback whenever:
\begin{itemize}
	\item the top row is an equalizer diagram;
	\item the two composites in the bottom row are equal;
	\item the triple of vertical arrows is a diagram morphism from the top row to the bottom row;
	\item the left-hand bottom arrow and the right-hand vertical arrow are monomorphisms.
\end{itemize}
From this observation and the fact $\mathrm{SplExt}(-,X)$ transforms coequalizers of equivalence relations in $\mathscr{C}$ into equalizers in $\mathsf{Sets}$ (which is briefly proved in \cite{[BJK2005a]}), we easily obtain:
\begin{prop}
	If $g:B\to B'$ is a regular epimorphism in $\mathscr{C}$, then the diagram
	$$\xymatrix{\mathrm{SplExt}(B',X)\ar[d]_{\mathrm{SplExt}(g,X)}\ar[r]^{\mu_{B'}}&\mathrm{hom}(B',M)\ar[d]^{\mathrm{hom}(g,M)}\\\mathrm{SplExt}(B,X)\ar[r]_{\mu_{B}}&\mathrm{hom}(B,M)}$$
	is a pullback.\qed
\end{prop}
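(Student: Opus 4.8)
The plan is to reduce the statement to the elementary ``easy observation'' recorded just above it, whose four hypotheses we verify for the $3\times 2$ diagram obtained from the kernel pair of $g$. Concretely, I would take the kernel pair $(B\times_{B'}B,p_1,p_2)$ of $g$; since $\mathscr{C}$ is semi-abelian, hence Barr exact, and $g$ is a regular epimorphism, $g$ is the coequalizer of $p_1$ and $p_2$, and $(B\times_{B'}B,p_1,p_2)$ is an equivalence relation on $B$. Applying $\mathrm{SplExt}(-,X)$ and using the fact recalled above (from \cite{[BJK2005a]}) that $\mathrm{SplExt}(-,X)$ sends coequalizers of equivalence relations to equalizers, the row
$$\mathrm{SplExt}(B',X)\xrightarrow{\ \mathrm{SplExt}(g,X)\ }\mathrm{SplExt}(B,X)\rightrightarrows\mathrm{SplExt}(B\times_{B'}B,X)$$
is an equalizer diagram in $\mathsf{Sets}$; this is the top row. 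Applying the contravariant representable functor $\mathrm{hom}(-,M)$ to the same coequalizer (representables turn colimits into limits) gives that
$$\mathrm{hom}(B',M)\xrightarrow{\ \mathrm{hom}(g,M)\ }\mathrm{hom}(B,M)\rightrightarrows\mathrm{hom}(B\times_{B'}B,M)$$
is also an equalizer diagram, so in particular its two composites agree; this is the bottom row.

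Next I would note that the three components $\mu_{B'}$, $\mu_B$, $\mu_{B\times_{B'}B}$ of $\mu$ constitute a diagram morphism from the top row to the bottom row, by naturality of $\mu$ with respect to $g$, $p_1$, and $p_2$. The two remaining hypotheses of the observation are the monomorphism conditions: $\mathrm{hom}(g,M)$ is a monomorphism because $g$, being a regular epimorphism, is an epimorphism, so precomposition with $g$ is injective on hom-sets; and $\mu_{B\times_{B'}B}$ is a monomorphism because $\mu$ is, by definition of a weak representation, a monomorphism of functors. With all four hypotheses checked, the observation gives that the left-hand square is a pullback, which is exactly the assertion.

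The only step that is not purely formal is the first one: recognising that in a semi-abelian category (in particular Barr exact) a regular epimorphism is the coequalizer of its kernel pair and that its kernel pair is an equivalence relation, so that the quoted property of $\mathrm{SplExt}(-,X)$ applies directly. Once that is in hand, the rest is a routine chase in $\mathsf{Sets}$, and I do not anticipate any genuine difficulty.
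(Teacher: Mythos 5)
Your argument is correct and is precisely the one the paper intends: the paper derives this proposition from the displayed "easy observation" together with the fact that $\mathrm{SplExt}(-,X)$ sends coequalizers of equivalence relations to equalizers, and your proposal simply fills in the routine verification of the four hypotheses using the kernel pair of $g$. No issues.
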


In terms of acting morphisms this proposition can be rephrased as:

\begin{prop}
	For a regular epimorphism $g:B\to B'$, a morphism $\varphi:B'\to M$ is an acting morphism if (and only if) so is $\varphi g$.\qed
\end{prop}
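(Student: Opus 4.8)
The plan is to obtain the statement as the ``elementwise'' reading of Proposition~4.4. Recall from the terminology fixed after Definition~4.1 that a morphism $\psi:C\to M$ is an \emph{acting morphism} exactly when it lies in the image of $\mu_C:\mathrm{SplExt}(C,X)\to\mathrm{hom}(C,M)$, i.e. when $\psi=\mu_C([F])$ for some $[F]\in\mathrm{SplExt}(C,X)$; so everything reduces to comparing images of $\mu_B$ and $\mu_{B'}$.

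I would first dispose of the ``only if'' implication. Assuming $\varphi=\mu_{B'}([E'])$ with $[E']\in\mathrm{SplExt}(B',X)$, naturality of $\mu$ --- equivalently, commutativity of the square in Proposition~4.4 --- gives
$$\varphi g=\mathrm{hom}(g,M)\bigl(\mu_{B'}([E'])\bigr)=\mu_B\bigl(\mathrm{SplExt}(g,X)([E'])\bigr),$$
so $\varphi g$ is the acting morphism of $\mathrm{SplExt}(g,X)([E'])$ (concretely, of the pullback of $E'$ along $g$), hence an acting morphism.

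For the ``if'' implication --- the direction carrying the content --- I would start from $\varphi g=\mu_B([E])$ with $[E]\in\mathrm{SplExt}(B,X)$ and observe that the pair $([E],\varphi)$ satisfies $\mu_B([E])=\varphi g=\mathrm{hom}(g,M)(\varphi)$, hence is an element of the pullback set of the cospan $\mathrm{SplExt}(B,X)\xrightarrow{\mu_B}\mathrm{hom}(B,M)\xleftarrow{\mathrm{hom}(g,M)}\mathrm{hom}(B',M)$. Since $g$ is a regular epimorphism, Proposition~4.4 identifies this pullback with $\mathrm{SplExt}(B',X)$, the two projections being $\mathrm{SplExt}(g,X)$ and $\mu_{B'}$. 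Therefore there is a unique $[E']\in\mathrm{SplExt}(B',X)$ with $\mathrm{SplExt}(g,X)([E'])=[E]$ and $\mu_{B'}([E'])=\varphi$; the second equality says precisely that $\varphi$ is the acting morphism of $E'$, completing the proof.

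I do not expect a genuine obstacle: the whole content is already packaged in the pullback assertion of Proposition~4.4, and this proposition is merely its translation from functors to elements. The only thing to watch is the bookkeeping --- that ``$\varphi$ is an acting morphism'' means exactly membership of $\varphi$ in the image of the component $\mu_{B'}$ --- so that it is lined up correctly against the universal property supplied by Proposition~4.4. (Alternatively one could run the ``if'' direction through Proposition~4.2, by descending the split extension $E$ along $g$, but routing through Proposition~4.4 is the shortest path.)
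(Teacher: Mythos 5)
Your proposal is correct and matches the paper's intent exactly: the paper presents Proposition~4.5 as nothing more than the elementwise rephrasing of the pullback assertion of Proposition~4.4, which is precisely what you carry out (naturality for ``only if'', the pullback's universal property for ``if'').
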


Recall that a semi-abelian category $\mathscr{C}$ is \textit{action accessible} in the sense of \cite{[BJ2009]} if, for every object $X$ in $\mathscr{C}$, every split extension $E=(A,B,X,\alpha,\beta,\kappa)$ admits a morphism of the form $(f,g,1_X):E\to E'$ with faithful $E'=(A',B',X,\alpha',\beta',\kappa')$. Since $\mathscr{C}$ admits (regular epi, mono) factorizations, from Corollary 4.3 and Proposition 4.4, we obtain:

\begin{teo}
	Every weakly action representable category is action accessible.\qed
\end{teo}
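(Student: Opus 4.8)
The plan is to assemble the theorem from the three ingredients the excerpt has just finished setting up: Corollary 4.3, which characterizes faithfulness of a split extension in terms of its acting morphism being a monomorphism; Proposition 4.4, which says that the comparison square between $\mathrm{SplExt}(-,X)$ and $\mathrm{hom}(-,M)$ along a regular epimorphism is a pullback; and the standing assumption that a semi-abelian category has (regular epi, mono) factorizations.

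First I would unwind the definition of action accessibility: fix an object $X$, fix an arbitrary split extension $E=(A,B,X,\alpha,\beta,\kappa)$, and produce a morphism $(f,g,1_X)\colon E\to E'$ whose codomain $E'$ is faithful. Choose a weak representation $(M,\mu)$ of actions on $X$, which exists by hypothesis, and form the acting morphism $\mu_B([E])\colon B\to M$. Factor it as $\mu_B([E]) = m\circ g$ with $g\colon B\to B'$ a regular epimorphism and $m\colon B'\to M$ a monomorphism. The key claim is that $m$ is itself an acting morphism, i.e.\ $m = \mu_{B'}([E'])$ for some split extension $E'$ over $B'$: this is exactly the content of Proposition 4.4 (equivalently Proposition 4.5) applied to the regular epimorphism $g$, since $m\circ g = \mu_B([E])$ is an acting morphism, so $m$ is one too. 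Then, since $\mu_{B'}([E']) = m$ is a monomorphism, Corollary 4.3 tells us $E'$ is faithful.

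The only remaining point is to produce the morphism $(f,g,1_X)\colon E\to E'$ itself, not merely the objects $B'$ and $E'$. Here I would invoke Proposition 4.2: the equality $\mu_{B'}([E'])\circ g = m\circ g = \mu_B([E])$ is precisely condition (c) of that proposition for the morphism $g\colon B\to B'$ (with the codomain extension being $E'$), hence by (c)$\Rightarrow$(a) there exists $f\colon A\to A'$ making $(f,g,1_X)\colon E\to E'$ a morphism of split extensions. This completes the construction, so $\mathscr{C}$ is action accessible.

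I do not expect a genuine obstacle, since the statement is explicitly flagged as an immediate consequence of the preceding results; the only care needed is bookkeeping. One should double-check that Proposition 4.4/4.5 genuinely yields a split extension $E'$ over $B'$ realizing $m$ as its acting morphism (the pullback in Proposition 4.4 guarantees a preimage of $m$ in $\mathrm{SplExt}(B',X)$, and one must note it maps to $[E]$ in $\mathrm{SplExt}(B,X)$ under $\mathrm{SplExt}(g,X)$, which is what makes Proposition 4.2 applicable). Everything else is formal.
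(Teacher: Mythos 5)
Your proposal is correct and is exactly the argument the paper intends: factor the acting morphism $\mu_B([E])$ as a regular epimorphism followed by a monomorphism, use Proposition 4.4/4.5 to recognize the mono part as the acting morphism of a split extension $E'$ over $B'$ mapping to $[E]$, apply Proposition 4.2 to get the morphism $(f,g,1_X)\colon E\to E'$, and conclude faithfulness of $E'$ from Corollary 4.3. The paper leaves all of this implicit (the theorem is stated with \qed after citing these ingredients), and your bookkeeping, including the observation that the pullback square guarantees $\mathrm{SplExt}(g,X)$ sends $[E']$ to $[E]$, fills in the details faithfully.
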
  

The fact that the category of (not-necessarily-unital) rings is action accessible was proved in \cite{[BJ2009]}, and now Theorem 4.6 gives a new proof of it, more category-theoretic in a sense. Indeed, for rings, and, more generally, for $K$-algebras, given $X$ we can take $M$ above to be $\mathrm{End}_K(X)\times\mathrm{End}_K(X)^{\mathrm{op}}$ and define $\mu$ by $\mu_B([E])=\langle l,r\rangle$, where $(B,X,l,r)$ corresponds to $E=(A,B,X,\alpha,\beta,\kappa)$ as in Section 2 \textendash\, this shows that the category $\mathsf{Alg}(K)$ is weakly action representable, and then we can apply Theorem 4.6. Furthermore, Proposition 4.5 is in fact a category-theoretic counterpart of Proposition 2.1, which was my main motivation for introducing weakly action representable categories.

\section{Remarks on initial weak reprentations}

In this section, $\mathscr{C}$ denotes again a fixed semi-abelian category, and $X$ denotes an object in $\mathscr{C}$; dealing with split extensions in $\mathscr{C}$ we will shorten our notation: instead of $E=(A,B,X,\alpha,\beta,\kappa)$, let us write just $E_{B,X}$. 

Consider the commutative diagram 
$$\xymatrix{\mathsf{SplExt}(-,X)\ar[rr]^-{E_{B,X}\mapsto(B,[E_{B,X}])}\ar[dr]_{E_{B,X}\mapsto B\,\,\,\,\,}&&\mathrm{El}(\mathrm{SplExt}(-,X))\ar[dl]^{\,\,\,\,\,(B,[E_{B,X}])\mapsto B}\\&\mathscr{C}}$$
in which:
\begin{itemize}
	\item $\mathsf{SplExt}(-,X)$ is the category of all split extensions $E_{B,X}$ in $\mathscr{C}$ with fixed $X$ (and whose morphisms are identities on $X$);
	\item $\mathrm{El}(\mathrm{SplExt}(-,X))$ is the category of elements of the functor $\mathrm{SplExt}(-,X)$;
	\item the horizontal arrow is the `standard' equivalence and the two other arrows are the canonical functors defined as shown.
\end{itemize}
For the canonical functors above, let us write 
$$(L,\lambda)=\mathrm{colim}(\mathsf{SplExt}(-,X)\to\mathscr{C})=\mathrm{colim}(\mathrm{El}(\mathrm{SplExt}(-,X))\to\mathscr{C}),$$
where $L$ is the vertex of the colimiting cone, and, in terms of the first equality, $$\lambda=(\lambda_{E_{B,X}}:B\to L)_{E_{B,X}\in\mathsf{SplExt}(-,X)},$$
whenever this colimit exists.

Of course, identifying our two canonical functors with each other, $\lambda$ can also be seen as a natural transformation $$\mathrm{SplExt}(-,X)\to\mathrm{hom}(-,L)$$ with $\lambda_B([E_{B,X}])=\lambda_{E_{B,X}}$, and then $(L,\lambda)$ becomes nothing but a universal arrow $\mathrm{SplExt}(-,X)\to\mathrm{Y}$, where $$\mathrm{Y}:\mathscr{C}\to\mathsf{Sets}^{\mathscr{C}^{\mathrm{op}}}$$
is the Yoneda embedding of $\mathscr{C}$. Conversely, the existence of a universal arrow $(L,\lambda):\mathrm{SplExt}(-,X)\to\mathrm{Y}$ implies the existence of the colimit above with the same relationship between them, that is, with $\lambda_{E_{B,X}}$ defined by $\lambda_{E_{B,X}}=\lambda_B([E_{B,X}])$.

Furthermore, we immediately obtain:
\begin{prop}
	The following conditions are equivalent:
	\begin{itemize}
		\item [(a)] $X$ has representable object actions on it;
		\item [(b)] there exists a universal arrow $(L,\lambda):\mathrm{SplExt}(-,X)\to\mathrm{Y}$, for which\\ $\lambda:\mathrm{SplExt}(-,X)\to\mathrm{hom}(-,L)$ is an isomorphism.\qed 
	\end{itemize}
\end{prop}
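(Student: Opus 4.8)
The plan is to deduce the equivalence directly from the Yoneda lemma, using the identification $\mathrm{SplExt}(-,X)\approx\mathrm{Act}(-,X)$ recalled at the start of this section, so that ``$X$ has representable object actions on it'' means precisely that the functor $\mathrm{SplExt}(-,X):\mathscr{C}^{\mathrm{op}}\to\mathsf{Sets}$ is representable.

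For the implication (b)$\Rightarrow$(a) there is essentially nothing to do: if $(L,\lambda)$ is a universal arrow for which $\lambda:\mathrm{SplExt}(-,X)\to\mathrm{hom}(-,L)$ is an isomorphism, then $L$, equipped with $\lambda$, is by definition a representing object for $\mathrm{SplExt}(-,X)$, hence for $\mathrm{Act}(-,X)$, and so $X$ has representable object actions on it.

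For (a)$\Rightarrow$(b) I would start from a representing object, that is, an object $L$ of $\mathscr{C}$ together with an isomorphism $\lambda:\mathrm{SplExt}(-,X)\to\mathrm{hom}(-,L)=\mathrm{Y}(L)$, and verify that $(L,\lambda)$ is a universal arrow from $\mathrm{SplExt}(-,X)$ to $\mathrm{Y}$. Given any object $C$ of $\mathscr{C}$ and any natural transformation $\theta:\mathrm{SplExt}(-,X)\to\mathrm{hom}(-,C)$, the composite $\theta\circ\lambda^{-1}:\mathrm{hom}(-,L)\to\mathrm{hom}(-,C)$ is, by the Yoneda lemma, of the form $\mathrm{hom}(-,h)$ for one and only one morphism $h:L\to C$; this $h$ is then the unique morphism satisfying $\mathrm{hom}(-,h)\circ\lambda=\theta$. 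Hence $(L,\lambda)$ has the required universal property, and $\lambda$ is an isomorphism, which is exactly (b).

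The only point calling for (a small amount of) care is the translation between ``representable object actions on $X$'' and ``$\mathrm{SplExt}(-,X)$ representable'', which is supplied by the canonical functor isomorphism $\mathrm{SplExt}(-,X)\approx\mathrm{Act}(-,X)$; beyond that, there is no genuine obstacle. The statement is a direct instance of the fact that representing objects are precisely the vertices of universal arrows into a Yoneda embedding, together with the full faithfulness of $\mathrm{Y}$, which is what yields both the existence and the uniqueness of $h$ above.
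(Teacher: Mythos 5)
Your proof is correct and is exactly the standard argument the paper has in mind: the paper leaves Proposition 5.1 as an immediate consequence (marked \qed with no written proof) of the identification of representability of $\mathrm{SplExt}(-,X)\approx\mathrm{Act}(-,X)$ with the existence of a universal arrow into the Yoneda embedding whose transpose is invertible, which is precisely what you verify via the Yoneda lemma. No discrepancy to report.
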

\begin{prop}
	Let $(L,\lambda):\mathrm{SplExt}(-,X)\to\mathrm{Y}$ be a universal arrow. The following conditions are equivalent:
	\begin{itemize}
		\item [(a)] there exists a weak representation of actions on $X$;
		\item [(b)] $(L,\lambda)$ is a weak representation of actions on $X$.
		\item [(c)] $(L,\lambda)$ is an initial object in the category of weak representations of actions on $X$.\qed
	\end{itemize}
\end{prop}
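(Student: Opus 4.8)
The plan is to read off all three equivalences from the universal property of $(L,\lambda)$, since the heart of the matter is that a universal arrow into the Yoneda embedding is, by construction, the best approximation of $\mathrm{SplExt}(-,X)$ by a representable functor, and a weak representation is exactly a \emph{monomorphic} such approximation. The only genuine content is to show that the universal arrow is monomorphic as soon as \emph{some} monomorphic approximation exists, and that, once monomorphic, it is initial among all of them.

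First I would prove (a)$\Rightarrow$(b). Suppose $(N,\nu)$ is a weak representation of actions on $X$, so $\nu:\mathrm{SplExt}(-,X)\to\mathrm{hom}(-,N)$ is a monomorphism of functors. By the universal property of $(L,\lambda):\mathrm{SplExt}(-,X)\to\mathrm{Y}$ there is a unique morphism $n:L\to N$ in $\mathscr{C}$ with $\mathrm{hom}(-,n)\circ\lambda=\nu$. Since $\nu$ is a monomorphism of functors and it factors through $\lambda$, the natural transformation $\lambda$ is itself a monomorphism of functors (if $\lambda_B(s)=\lambda_B(t)$ then $\nu_B(s)=\nu_B(t)$, hence $s=t$). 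Thus $(L,\lambda)$ is a weak representation, which is (b). The implication (b)$\Rightarrow$(a) is trivial.

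Next I would prove (b)$\Rightarrow$(c). Assume $(L,\lambda)$ is a weak representation. Given any weak representation $(N,\nu)$, the universal property again yields a unique $n:L\to N$ with $\mathrm{hom}(-,n)\circ\lambda=\nu$; this $n$ is precisely a morphism $(L,\lambda)\to(N,\nu)$ in the category of weak representations of actions on $X$ (a morphism being, by definition, a morphism of $\mathscr{C}$ making the evident triangle of natural transformations commute). Uniqueness of $n$ is exactly uniqueness of the mediating morphism, so $(L,\lambda)$ is initial. Finally (c)$\Rightarrow$(a) is immediate: an initial object of the category of weak representations of actions on $X$ is in particular an object of that category, i.e.\ a weak representation exists.

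I do not expect a serious obstacle here; the one point requiring a little care is the bookkeeping in (b)$\Rightarrow$(c), namely checking that the mediating morphism supplied by the universal property of a universal arrow into $\mathrm{Y}$ coincides, as data, with a morphism in the category of weak representations, and that the two uniqueness statements match. This is routine once one unwinds that $\mathrm{hom}(-,n)\circ\lambda=\nu$ is the defining commutativity both for the universal arrow and for a morphism of weak representations. Everything else is a direct application of the universal property recorded just before the statement.
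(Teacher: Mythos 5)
Your proof is correct and follows exactly the route the paper intends: the paper marks this proposition with \qed as an immediate consequence of the universal property of $(L,\lambda)$, and your write-up simply makes explicit the standard facts that a factorization $\nu=\mathrm{hom}(-,n)\circ\lambda$ with $\nu$ monic forces $\lambda$ to be monic, and that the mediating morphisms of the universal arrow are precisely the morphisms of weak representations. Nothing to add.
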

In the situation of 5.1(c) we will simply say that $(L,\lambda)$ is an \textit{initial weak representations of actions on} $X$, and we have:
\begin{cor}
	If $\mathscr{C}$ is total in the sense of R. Street and R. Walters \cite{[SW1978]}, then the following conditions are equivalent:
	\begin{itemize}
		\item [(a)] there exists a weak representation of actions on $X$;
		\item [(b)] there exists an initial weak representation of actions on $X$.\qed 
	\end{itemize}
\end{cor}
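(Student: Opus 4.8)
The plan is to read the Corollary off Proposition 5.2. The implication (b)$\Rightarrow$(a) needs nothing: an initial weak representation of actions on $X$ is in particular a weak representation of actions on $X$, so its existence immediately gives (a). (This direction uses no hypothesis on $\mathscr{C}$.)

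For (a)$\Rightarrow$(b) the first task is to produce a universal arrow $(L,\lambda):\mathrm{SplExt}(-,X)\to\mathrm{Y}$. This is exactly where totality is used: $\mathscr{C}$ being total means, by definition, that the Yoneda embedding $\mathrm{Y}:\mathscr{C}\to\mathsf{Sets}^{\mathscr{C}^{\mathrm{op}}}$ has a left adjoint, equivalently that every presheaf on $\mathscr{C}$ admits a universal arrow to $\mathrm{Y}$ \textendash\ equivalently still, as recorded before Proposition 5.1, that the colimit $\mathrm{colim}(\mathsf{SplExt}(-,X)\to\mathscr{C})$ exists. Specializing this to the presheaf $\mathrm{SplExt}(-,X)$ (a legitimate, small-valued presheaf, since $\mathrm{SplExt}(B,X)$ is a set of isomorphism classes, or via $\mathrm{SplExt}(-,X)\approx\mathrm{Act}(-,X)$) yields the desired $(L,\lambda)$.

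Given this $(L,\lambda)$, I would simply invoke Proposition 5.2: its hypothesis ($(L,\lambda)$ a universal arrow) is met, and its condition (a) is our standing hypothesis (a), so it delivers condition (c) \textendash\ that $(L,\lambda)$ is an initial object in the category of weak representations of actions on $X$, i.e.\ an initial weak representation of actions on $X$ in the terminology fixed just above the Corollary. That is precisely (b). Since every step is an application of an already-proved statement, I foresee no real obstacle; the only point worth a second's attention is checking that the abstract formulation of totality (``$\mathrm{Y}$ has a left adjoint'') does supply a universal arrow for this \emph{particular} presheaf and that no size issue intervenes, after which Proposition 5.2 finishes the argument.
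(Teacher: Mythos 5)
Your argument is exactly the one the paper intends: the corollary is stated with a \qed precisely because totality supplies the universal arrow $(L,\lambda):\mathrm{SplExt}(-,X)\to\mathrm{Y}$ (the Yoneda embedding having a left adjoint), after which Proposition 5.2 gives the equivalence, with (b)$\Rightarrow$(a) being trivial. Your proposal is correct and matches the paper's (implicit) proof.
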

We might call (a semi-abelian category) $\mathscr{C}$ \textit{weakly initially action representable} if every object in $\mathscr{C}$ has initial weak representations of actions on it. Then we have:
\begin{prop}
	Every action representable category is weakly initially action representable, every weakly initially action representable category is weakly action representable, and every total weakly action representable category is weakly initially action representable. In particular, every weakly action representable variety of universal algebras is weakly initially action representable.\qed 
\end{prop}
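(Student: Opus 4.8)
The statement collects four implications, and the plan is to verify them one by one, leaning on the characterizations in Propositions 5.1 and 5.2 and on Corollary 5.3.

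\textbf{First implication: action representable $\Rightarrow$ weakly initially action representable.} Fix an object $X$. By Proposition 5.1, $X$ having representable object actions means there is a universal arrow $(L,\lambda):\mathrm{SplExt}(-,X)\to\mathrm Y$ for which $\lambda:\mathrm{SplExt}(-,X)\to\mathrm{hom}(-,L)$ is an \emph{isomorphism} of functors. An isomorphism is in particular a monomorphism, so $(L,\lambda)$ is a weak representation of actions on $X$; combined with the fact that $(L,\lambda)$ is a universal arrow, Proposition 5.2(b)$\Leftrightarrow$(c) tells us it is an initial object in the category of weak representations of actions on $X$. Hence $X$ has an initial weak representation of actions on it, and since $X$ was arbitrary, $\mathscr C$ is weakly initially action representable.

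\textbf{Second implication: weakly initially action representable $\Rightarrow$ weakly action representable.} This is immediate from the definitions: an initial weak representation of actions on $X$ is, in particular, a weak representation of actions on $X$, so if every object has the former it certainly has the latter.

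\textbf{Third implication: total weakly action representable $\Rightarrow$ weakly initially action representable.} Fix $X$. Since $\mathscr C$ is weakly action representable, there exists a weak representation of actions on $X$, i.e.\ condition (a) of Corollary 5.3 holds; totality of $\mathscr C$ then gives condition (b), namely an initial weak representation of actions on $X$. As $X$ was arbitrary, $\mathscr C$ is weakly initially action representable. (One should double-check that the ambient category in Corollary 5.3 is exactly $\mathscr C$ and that ``total'' is used in the same sense, but no new argument is needed.)

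\textbf{Last sentence: every weakly action representable variety is weakly initially action representable.} A variety of universal algebras, regarded as a category, is total (it is cocomplete, well-copowered, and has a generator — a small dense subcategory of free algebras — so it is total in the sense of Street--Walters). Thus a weakly action representable variety is a total weakly action representable category, and the third implication applies. The only point deserving care here is the assertion that varieties are total; I would either cite the standard fact or recall the one-line argument via the free-algebra functors forming a strong generator. I do not expect a genuine obstacle anywhere: the proposition is essentially a bookkeeping consequence of Propositions 5.1--5.2 and Corollary 5.3, and the ``\qed'' already appended in the excerpt signals that the author regards it as immediate. If any step is delicate, it is matching up the semi-abelian/variety hypotheses with the totality hypothesis of Corollary 5.3, so that the corollary genuinely applies verbatim.
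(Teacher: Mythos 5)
Your proof is correct and follows exactly the route the paper intends: the proposition is stated with \qed because it is an immediate bookkeeping consequence of Proposition 5.1 (isomorphism $\Rightarrow$ monomorphism for the first implication), the definitions (for the second), and Corollary 5.3 together with the standard fact that varieties are total (for the third and the final sentence). No discrepancy with the paper's (omitted) argument.
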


Note that additional `non-varietal' examples of total categories are due to W. Tholen \cite{[T1980]}, while semi-abelian monadic over $\mathsf{Sets}$ categories were considered by M. Gran and J. Rosick\'y \cite{[GR2004]} (for a different reason, but they are also total by a generalization \cite{[T1980]} of a result of \cite{[SW1978]}).  

\section{Open questions}

\textbf{6.1.} Theorem 3.4, which is the main result of Section 3, can be immediately deduced from Theorem 5.2 of \cite{[JK1994]} (stated in the context of varieties of groups with multiple operators in the sense of P. Higgins \cite{[H1956]}) and the fact that $\alpha:A\to B$ is classically-central if and only if it is algebraically central (see Definition 1.1 and the remarks below it). And there is a similar situation with the equivalence of three notions of centrality in the context considered in \cite{[JK2000a]}; in particular, this applies to $K$-algebras, where the commutator-theoretic counterpart of a classically-central extension is a surjective homomorphism $\alpha:A\to B$ with $aa'=0$ for all $a,a'\in A$ with $\alpha(a')=0$. We then observe:
\begin{itemize}
	\item To use the equalities like $aa'=0$ (just mentioned) or $aa'=a'a$ (in Definition 1.1(a)) is obviously simpler than to use the corresponding centrality conditions of A. Fr\"ohlich \cite{[F1963]} and A. S.-T. Lue \cite{[L1967]}.
	\item Therefore it would be interesting to find counterparts of classically-central extensions, if not in general, then at least for some other specific algebraic contexts that the theory of central extensions of \cite{[JK1994]} applies to.    
\end{itemize}

\textbf{6.2.} Is there any category-theoretic counterpart of what is done in Section 3 beyond the fact that Proposition 4.5 is a category-theoretic counterpart of Proposition 2.1, as mentioned at the end of Section 4? 

\textbf{6.3.} Is there any reasonable mild condition on a semi-abelian category under which its action accessibility implies that it is weakly action representable? I would not exclude a possibility that this implication holds for all semi-abelian varieties of universal algebras. This question is also interesting in connection with the results of A. Montoli \cite{[M2010]}. 

\textbf{6.4.} Motivated by the results of Section 5, it seems interesting to calculate the colimit $$(L,\lambda)=\mathrm{colim}(\mathsf{SplExt}(-,X)\to\mathscr{C})$$ considered in Section 5 in various concrete cases. For example, in the case $\mathscr{C}=\mathsf{Alg}(K)$ considered in Sections 2 and 3, it is easy to see that $L$ is not necessarily isomorphic to $\mathrm{End}_K(X)\times\mathrm{End}_K(X)^{\mathrm{op}}$; a better candidate for it (obviously) is
$$\{(\varphi,\psi)\in\mathrm{End}_K(X)\times\mathrm{End}_K(X)^{\mathrm{op}}\mid\forall_{x,x'\in X}\,x\varphi(x')=\psi(x)x'\}$$
denoted by $[X]$ in \cite{[BJK2005a]} -- see Lemma 2.2 and Propositions 2.3 and 2.4 there; the same paper in fact suggests a couple of other examples of closely related algebraic categories, where the colimit above could be described. Note also:
\begin{itemize}
	\item Since $[X]$ above is a $K$-subalgebra of $\mathrm{End}_K(X)\times\mathrm{End}_K(X)^{\mathrm{op}}$ such that, for every action $(B,X,\lambda,\rho)$ (as defined in Section 2), it contains the image of $\langle\lambda,\rho\rangle:B\to\mathrm{End}_K(X)\times\mathrm{End}_K(X)^{\mathrm{op}}$, one could rewrite our Section 2 using $[X]$ instead of $\mathrm{End}_K(X)\times\mathrm{End}_K(X)^{\mathrm{op}}$.
	\item Unlike \cite{[BJK2005a]}, \cite{[BJK2005]} used the symbol $[X]$ only for representing objects in action representable cases.  
\end{itemize}

\textbf{6.5.} There are important Bourn protomodular non-semi-abelian categories where split extensions still correspond to actions, possibly different from the actions we mentioned in Section 4. The first such example to consider would be the category of topological groups, which has representable `ordinary' actions, as shown by F. Cagliari and M. M. Clementino \cite{[CC2019]}; but there are also several others, including, say, the very recent one of bornological groups, due to F. Borceux and M. M. Clementino \cite{[BC2021]}. In all action representable examples, semi-abelian or not, the representing object can of course be obtained as the above-mentioned colimit $\mathrm{colim}(\mathsf{SplExt}(-,X)\to\mathscr{C})$, but, as far as I know, this colimit was never studied before as such. Would this give new action representability results? A closely related question would be: would using this colimit in connection with the results of J. R. A. Gray \cite{[G2017]} be helpful?          

{}

\end{document}